\newtheorem{Thm}{Theorem}[section]
\newtheorem{Lem}[Thm]{Lemma}
\newtheorem{Prop}[Thm]{Proposition}
\newtheorem{Def}[Thm] {Definition}
\title[Liv\v{s}ic measurable rigidity]{Liv\v{s}ic measurable rigidity theorem for $\mathcal{C}^1$ generic  volume-preserving Anosov  systems}
\keywords{ Liv\v{s}ic theorem, Central Limit Theorem, Decay of correlation}
\thanks{2000 {\it Mathematics Subject Classification}.  37C20}
\date{September. 2014}
\author{Yun Yang$^{*}$}
 \address{ $^{*}$School of Mathematical Sciences,
Peking University, Beijing 100871, China; 
Email: summeryangyun@gmail.com}
\begin{document}
\maketitle
\begin{abstract}
In this paper, we prove that for $\mathcal{C}^1$ generic volume-preserving  Anosov diffeomorphisms  of  a  compact   Riemannian manifold,  Liv\v{s}ic measurable rigidity theorem holds. We also prove that for $\mathcal{C}^1$ generic volume-preserving Anosov flows  of a  compact   Riemannian manifold,  Liv\v{s}ic measurable rigidity theorem holds.
\end{abstract}

\section{Introduction}
  
Let $T: M\rightarrow M$  be a diffeomorphism on a compact
 Riemannian manifold $M$.  We consider a
 {\bf cocycle} $\mathcal{A}: \mathbb{Z}\times M\rightarrow \mathbb{R}$; that is,  a map
satisfying the cocycle relation
$$\mathcal{A}(n_1+n_2,x)=\mathcal{A}(n_1, T^{n_2}(x))+\mathcal{A}(n_2,x),$$
for every $n_1,n_2\in \mathbb{Z}$ and every $x\in M$. 
Following the definition in cohomological algebra, we call a cocycle $\mathcal{A}$  {\bf  a coboundary} if it satisfies the
cohomological equation:
\begin{eqnarray}\label{coboundary}\mathcal{A}(n,x)=\Phi(T^{n}(x))-\Phi(x),
\end{eqnarray}
where $\Phi:M\rightarrow \mathbb{R}$ is a function.  Furthermore, two cocycles are
called  cohomologous if their difference is a coboundary.

It is easy to see that  coboundary $\mathcal{A}$ must have
{\bf trivial periodic data}, i.e.
\begin{eqnarray}\label{POO}\mathcal{A}(n,x)=0, \,\,\,\,\forall x\in M,  \,\,\,T^{n}(x)=x.
\end{eqnarray}
One has three  natural questions to propose.

1. Is the necessary condition, trivial periodic data,   also a
sufficient condition? 

2. {\bf Measurable rigidity:} If the cocycle $\mathcal{A}: \mathbb{Z}\times M\rightarrow \mathbb{R}$ is H\"{o}lder continuous, can we get a H\"{o}lder continuous solution  $\Phi$  to equation (\ref{coboundary})  from a  measurable solution? 

3. {\bf Higher regularity:} If the cocycle $\mathcal{A}: \mathbb{Z}\times M\rightarrow \mathbb{R}$ is $\mathcal{C}^r$ for some $1\leq r\leq \infty$ or $r=\omega$, is  a continuous solution to equation (\ref{coboundary}) also $\mathcal{C}^r$?

Liv\v{s}ic took the lead in considering  these three questions  for the case when $f$ is a transitive Anosov diffeomorphism on a compact     Riemannian manifold $M$            
   \cite{livsic,livsic 1}.  Thus, we call results answering the above questions Liv\v{s}ic theorems.  Current research is usually concerned with two variations on this subject, namely altering the base system $T$ and altering the group $\mathbb{R}$.  Some of the highlights are \cite{R.D.L,R.D.L2,R.D.L3,NP,NT,NT2,P,PP,PW, K}. 
The following theorems are some classical results. 
\begin{Thm}\cite{livsic, livsic 1,C.W, HK}\label{livsic for Anosov} Let $T:M\rightarrow M$ be a $\mathcal{C}^1$ transitive Anosov  diffeomorphism on a compact    Riemannian manifold $M$ and let $\phi:M\rightarrow \mathbb{R}$ be a H\"{o}lder continuous function.
\begin{enumerate}
 \item[$(1)$] {\bf Existence of solutions.}  $\phi=\Phi(T)-\Phi$
has a H\"{o}lder continuous solution  $\Phi$ if and only if $\sum_{x\in\mathcal{O}}\phi(x)=0$, for every $T$-periodic orbit $\mathcal{O}$.
\item[$(2)$] {\bf Measurable rigidity.}  For any Gibbs measure $\mu$ with H\"{o}lder continuous potential,  if there exists a $\mu$-measurable solution $\Phi$ to $\phi=\Phi(T)-\Phi$, then there is a continuous
solution $\Psi$, with $\Phi=\Psi, a.e. \mu.$ 
 \end{enumerate}
\end{Thm}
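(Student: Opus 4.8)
The plan is to prove the two parts by different mechanisms, with the hyperbolic structure entering through the Anosov closing lemma in part $(1)$ and through the local product structure of the Gibbs measure in part $(2)$. For part $(1)$, the necessity of the obstruction is immediate: if $\phi=\Phi\circ T-\Phi$ and $T^n x=x$, then summing the cohomological equation around the orbit $\mathcal{O}=\{x,Tx,\dots,T^{n-1}x\}$ telescopes to $\sum_{y\in\mathcal{O}}\phi(y)=\Phi(T^n x)-\Phi(x)=0$. For sufficiency I would use transitivity to fix a point $p$ with dense forward orbit and define $\Phi$ on this orbit by the Birkhoff sums $\Phi(T^n p):=\sum_{k=0}^{n-1}\phi(T^k p)$, which by construction satisfies $\Phi(T^{n+1}p)-\Phi(T^n p)=\phi(T^n p)$. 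The whole point is then to show that $\Phi$ is H\"older uniformly continuous on this dense orbit, so that it extends to a H\"older function on $M$ satisfying the cohomological equation by continuity and density.

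The delicate and essential step is that H\"older estimate. Given $n>m$ with $T^n p$ close to $T^m p$, write $y=T^m p$, so that $\Phi(T^n p)-\Phi(T^m p)=\sum_{j=0}^{n-m-1}\phi(T^j y)$ is a Birkhoff sum over an orbit segment that almost closes up, since $T^{n-m}y=T^n p$ is near $y$. The Anosov closing lemma then produces a genuine periodic point $q$ with $T^{n-m}q=q$ and exponential shadowing $d(T^j y,T^j q)\le C\,d(T^{n-m}y,y)\,\theta^{\min(j,\,n-m-j)}$ for some $\theta\in(0,1)$. Invoking the hypothesis of vanishing periodic data gives $\sum_{j=0}^{n-m-1}\phi(T^j q)=0$, and comparing the two Birkhoff sums term by term using the H\"older norm of $\phi$ against the exponential shadowing bound yields a convergent geometric estimate uniform in the length, namely $|\Phi(T^n p)-\Phi(T^m p)|\le C'\,d(T^n p,T^m p)^{\alpha}$. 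This is the heart of the existence half.

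For part $(2)$ I would run a Hopf-type argument, using that a Gibbs measure $\mu$ with H\"older potential is ergodic, has full support, and carries a local product structure subordinate to the stable and unstable foliations. First I construct the continuous candidate: along stable leaves the series $S^s(x,y)=\sum_{k\ge 0}\big(\phi(T^k y)-\phi(T^k x)\big)$ converges by H\"older continuity of $\phi$ and exponential contraction, and along unstable leaves the analogous backward series $S^u$ converges; transporting a base value through unstable then stable holonomies via $S^u$ and $S^s$ produces a H\"older function $\Psi$ on each product box. To match $\Phi$ with $\Psi$, I use the identity $\Phi(T^N x)-\Phi(T^N y)=\Phi(x)-\Phi(y)+\sum_{k=0}^{N-1}\big(\phi(T^k x)-\phi(T^k y)\big)$, valid $\mu$-a.e., together with recurrence to a Lusin set $K$ on which $\Phi$ is uniformly continuous: for $y$ on the local stable leaf of $x$, the left side tends to $0$ along return times to $K$, forcing $\Phi(x)-\Phi(y)=S^s(x,y)$ a.e. on stable leaves, and symmetrically $\Phi(x)-\Phi(y)=S^u(x,y)$ a.e. on unstable leaves.

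Consequently $\Phi-\Psi$ is a.e. constant along both foliations; by the local product structure it is a.e. constant on each box, and full support together with connectedness of $M$ forces a single global constant, which I absorb into $\Psi$ to obtain $\Phi=\Psi$ a.e. The main obstacle is the measure-theoretic bookkeeping in this matching: individual leaves are $\mu$-null, so one must pass through the conditional measures $\mu^s,\mu^u$ and a density/recurrence argument to upgrade the a.e. cohomological equation to the leafwise identities before the gluing can be performed.
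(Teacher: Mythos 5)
The paper offers no proof of this statement: it is quoted as a classical theorem with citations to Liv\v{s}ic, Walkden and Hurder--Katok, and the only methodological indication given is the remark that the measurable-rigidity part in \cite{C.W} rests on Markov partitions and the Parry--Pollicott Liv\v{s}ic-type theorems for shifts of finite type \cite{PP}. So the comparison is with that cited route. Your part $(1)$ is exactly the classical Liv\v{s}ic argument (dense orbit, Birkhoff sums, Anosov closing lemma with exponentially weighted shadowing, geometric-series H\"older estimate, extension by density) and is correct; this is the same proof the references give. Your part $(2)$, however, takes a genuinely different route from the one the paper points to: instead of coding by a Markov partition and invoking the symbolic Gibbs theory, you run the direct Hopf-type argument on the manifold --- stable/unstable transfer sums $S^s,S^u$, a Lusin set, simultaneous returns, and the local product structure of the Gibbs measure. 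Both routes are legitimate and both exist in the literature. The symbolic proof outsources the measure theory to the shift, where the Gibbs property is a uniform estimate on cylinders, at the cost of building the partition and handling boundary identifications; your direct proof makes transparent exactly where the Gibbs hypothesis enters (ergodicity, full support, product structure of the conditional measures $\mu^s,\mu^u$), at the cost of the bookkeeping you yourself flag as the main obstacle.

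Two points in your part $(2)$ deserve to be made explicit, since they are where a careless version of the argument fails. First, the limit $\Phi(T^Nx)-\Phi(T^Ny)\to 0$ needs times $N$ at which \emph{both} $T^Nx$ and $T^Ny$ lie in the Lusin set $K$; recurrence of $x$ alone is not enough, and the standard fix is a positive-density (Birkhoff) argument applied to $x$ and to conditional-a.e.\ $y\in W^s_{\mathrm{loc}}(x)$, so that the two sets of return times have density $>\tfrac12$ and hence intersect infinitely often --- this is precisely where the product structure of $\mu$ is indispensable, and your sketch correctly identifies it but does not carry it out. Second, the theorem asks for a continuous \emph{solution} $\Psi$, not merely a continuous function a.e.\ equal to $\Phi$: you should close with the observation that $\Phi=\Psi$ $\mu$-a.e.\ and $T$-invariance of $\mu$ give $\phi=\Psi\circ T-\Psi$ $\mu$-a.e., whence everywhere, since both sides are continuous and a Gibbs measure with H\"older potential has full support. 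With these two steps filled in, your outline is a correct proof.
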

 \begin{Thm}\cite{livsic, livsic 1,C.W, HK}\label{livsic for flow}  Let $\{T^t\}$ be a $\mathcal{C}^1$  transitive  Anosov flow  on a compact     Riemannian manifold $M$ generated by the vector field $\xi$ and let $\phi:M\rightarrow \mathbb{R}$ be a H\"{o}lder continuous function. 
\begin{enumerate}
 \item[$(1)$]{\bf Existence of solutions.} 
$\phi=\Phi'_{\xi}$ for a H\"{o}lder continuous function $\Phi$ differentiable along the flow if and only if $\int^{t_p}_0\phi(T^s(p))ds=0$ for every periodic point $p$ with period $t_p$. 
\item[$(2)$] {\bf Measurable rigidity.}  For any Gibbs measure $\mu$ with H\"{o}lder continuous potential,  if there exists a $\mu$-measurable solution $\Phi$ differentiable along the flow such that  $\phi=\Phi'_{\xi}$ $\mu$-almost everywhere, then there is a continuous
solution $\Psi$, with $\Phi=\Psi, a.e. \mu.$ 
\end{enumerate}
\end{Thm}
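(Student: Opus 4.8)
The plan is to handle the two assertions in turn, with the Anosov closing lemma for flows as the central geometric tool and the fine structure of the Gibbs measure as the central measure-theoretic tool.

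\emph{Part $(1)$.} Necessity is a direct computation: if $\phi=\Phi'_\xi$ and $p$ is periodic of period $t_p$, then
\[\int_0^{t_p}\phi(T^s p)\,ds=\int_0^{t_p}\frac{d}{ds}\Phi(T^s p)\,ds=\Phi(T^{t_p}p)-\Phi(p)=0.\]
For sufficiency I would use transitivity to fix a point $p_0$ with dense forward orbit and define, along this orbit, $\Phi(T^t p_0)=\int_0^t\phi(T^s p_0)\,ds$. The heart of the argument is to prove that $\Phi$ is uniformly H\"older on the orbit, so that it extends continuously to $M$. Given times $a<b$ with $T^a p_0$ and $T^b p_0$ within a small $\delta$ of one another, the orbit segment between them is almost closed; the Anosov closing lemma for flows then yields a genuine periodic point $q$, of period $t_q\approx b-a$, whose orbit shadows $\{T^s p_0:a\le s\le b\}$ at an exponential rate and with $d(q,T^a p_0)$ comparable to $d(T^a p_0,T^b p_0)$. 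Since $\int_0^{t_q}\phi(T^s q)\,ds=0$ by hypothesis, the H\"older continuity of $\phi$ together with the integrable exponential shadowing estimate gives
\[\big|\Phi(T^b p_0)-\Phi(T^a p_0)\big|=\Big|\int_a^b\phi(T^s p_0)\,ds-\int_0^{t_q}\phi(T^s q)\,ds\Big|\le C\,d(T^a p_0,T^b p_0)^\alpha.\]
By density of the orbit, $\Phi$ extends to a H\"older function on $M$; differentiating $t\mapsto\Phi(T^t x)$ and passing to the limit using the continuity of $\phi$ shows $\Phi'_\xi=\phi$ everywhere.

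\emph{Part $(2)$.} Suppose now $\Phi$ is only $\mu$-measurable with $\Phi'_\xi=\phi$ $\mu$-a.e., so that for $\mu$-a.e.\ $x$ one has $\Phi(T^t x)=\Phi(x)+\int_0^t\phi(T^s x)\,ds$ for all $t$. For $y$ in the local stable manifold of $x$ I would introduce the continuous correction
\[\Delta^s(x,y)=-\int_0^{\infty}\big[\phi(T^s x)-\phi(T^s y)\big]\,ds,\]
which converges absolutely by H\"older continuity of $\phi$ and exponential contraction along $W^s$. From the identity
\[\Phi(x)-\Phi(y)=\big[\Phi(T^t x)-\Phi(T^t y)\big]+\int_0^t\big[\phi(T^s x)-\phi(T^s y)\big]\,ds,\]
the relation $\Phi(x)-\Phi(y)=\Delta^s(x,y)$ will follow for a.e.\ pair once $\Phi(T^{t_n}x)-\Phi(T^{t_n}y)\to0$ along some $t_n\to\infty$; I would get this from Luzin's theorem (making $\Phi$ uniformly continuous on a set $K$ with $\mu(K)>1-\epsilon$) together with recurrence, which supplies such $t_n$ with $T^{t_n}x,T^{t_n}y\in K$ and $d(T^{t_n}x,T^{t_n}y)\to0$. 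An analogous continuous correction $\Delta^u$ along unstable manifolds is obtained from the time-reversed flow. Finally, using the local product structure of the Anosov flow together with the fact that the Gibbs measure $\mu$ disintegrates compatibly along the stable and unstable foliations, a Hopf-type Fubini argument over local product boxes glues these leafwise relations into a single continuous $\Psi$ with $\Phi=\Psi$ $\mu$-a.e.

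The main obstacle in Part $(1)$ is the bookkeeping of the time reparametrization produced by the flow closing lemma: the period $t_q$ does not exactly equal $b-a$, and the resulting discrepancy in the integration limits must be absorbed, alongside the shadowing error, into the single H\"older bound. In Part $(2)$ the delicate point is the measure-theoretic gluing: upgrading the almost-everywhere leafwise relations to genuine continuity relies essentially on the ergodicity of $\mu$ and on the compatibility of its conditionals with the stable and unstable foliations, which is exactly where the Gibbs property of $\mu$ is used.
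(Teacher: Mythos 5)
The paper never proves this theorem: it is quoted as a classical result with citations to Liv\v{s}ic, Walkden \cite{C.W} and Hurder--Katok, and the paper explicitly notes that the measurable-rigidity part in \cite{C.W} is obtained via Markov partitions and the symbolic Liv\v{s}ic-type theorems of Parry--Pollicott \cite{PP}. So the comparison is with the cited proofs rather than with anything in this paper. Your Part $(1)$ is exactly the classical closing-lemma argument (define $\Phi$ along a dense orbit, use the flow closing lemma plus the vanishing of periodic-orbit integrals to get a uniform H\"older modulus, extend by density), including the correct identification of the time-reparametrization bookkeeping as the main nuisance; this matches the cited route. Your Part $(2)$ is a genuinely different route from the one the paper points to: instead of coding by a Markov partition and applying the measurable Liv\v{s}ic theorem for shifts of finite type, you argue directly on the manifold with a Hopf-type leafwise scheme --- stable and unstable correction functions $\Delta^s$, $\Delta^u$, a Luzin set, and the local product structure of the Gibbs measure to glue. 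Both routes are standard and valid; the symbolic route imports the Gibbs property in the clean combinatorial form used by Parry--Pollicott, while yours makes transparent exactly where the Gibbs hypothesis enters, namely the compatibility of the conditionals of $\mu$ with the stable and unstable foliations, and avoids the coding altogether.

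One step in your Part $(2)$ needs more than the word ``recurrence.'' For a fixed pair $y\in W^s_{loc}(x)$ you need common times $t_n\to\infty$ with \emph{both} $T^{t_n}x\in K$ and $T^{t_n}y\in K$; recurrence (or Birkhoff) applied to $x$ alone gives nothing about $y$, since $K$ is just a measurable set and $y$ is a specific point, not an independently chosen $\mu$-generic one. The standard repair is: by Birkhoff, $\mu$-a.e.\ point spends asymptotic time-density at least $1-\epsilon$ in $K$; by the disintegration of $\mu$ along the stable foliation (this is precisely the Gibbs/product-structure input you invoke later), for $\mu$-a.e.\ $x$ the conditional measure of the non-generic points on $W^s_{loc}(x)$ is zero, so for conditional-a.e.\ $y$ the two sets of good times have common density at least $1-2\epsilon>0$, which supplies the $t_n$. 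This yields the identity $\Phi(x)-\Phi(y)=\Delta^s(x,y)$ only for conditional-a.e.\ pairs, which is exactly what the final Fubini gluing consumes. You should also record that a Gibbs measure of a transitive Anosov flow has full support, so that the continuous function $\Psi$ produced by the gluing, which satisfies the cocycle identity on a full-measure (hence dense) set, satisfies it everywhere by continuity. With these repairs your outline is sound.
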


In this paper, we are only concerned with measurable rigidity. The proof of measurable rigidity in \cite{C.W} is 
based on the Markov partitions and Liv\v{s}ic type theorems for cocycles over shifts of finite type \cite{PP}, which   depend heavily on the equipment of Gibbs measures.  For the definition of Gibbs measures, we refer the reader to  a classical and short book  \cite{B} by Bowen.
For other measures, measurable rigidity may not hold. 

In this paper, we consider the measurable rigidity for  the special measure, volume measure $m$.  It is known that for $\mathcal{C}^2$ volume-preserving Anosov diffeomorphisms, the volume measure is a Gibbs measure with the H\"{o}lder continuous potential $$\varphi=-\log \det(DT|E^u).$$
However,  the volume measure  for $\mathcal{C}^1$ volume-preserving Anosov diffeomorphism may not be a Gibbs measure  with  H\"{o}lder continuous potential.   

Under a $\mathcal{C}^1$ generic hypothesis, we have the following result.
\begin{Thm}\label{Glivsic}There exists a residual subset $\mathcal{G}$ of $\mathcal{C}^1$ Anosov volume-preserving diffeomorphisms  on a compact   Riemannian   manifold $M$ such that for any $T\in\mathcal{G}$ and any 
H\"{o}lder 
continuous function $\phi:M\rightarrow \mathbb{R}$ ,
the following three conditions are equivalent:
\begin{enumerate}\item[$(1)$]$\sum_{x\in \mathcal{O}}\phi(x)=0$, for every $T$-periodic orbit $\mathcal{O}$,
 \item[$(2)$] $\phi(x)=\Phi(T(x))-\Phi(x)$ has a continuous solution,
\item[$(3)$] $\phi(x)=\Phi(T(x))-\Phi(x),a.e.$ for some measurable function $\Phi$.
\end{enumerate}
\end{Thm}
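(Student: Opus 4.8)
The plan is to isolate the single nontrivial implication and to read off the rest from the classical theory. The implication $(2)\Rightarrow(3)$ is immediate, since a continuous solution is in particular $m$-measurable, and $(2)\Rightarrow(1)$ follows by telescoping: if $\phi=\Phi\circ T-\Phi$ with $\Phi$ continuous, then $\sum_{x\in\mathcal{O}}\phi(x)$ collapses to $0$ on every periodic orbit $\mathcal{O}$. The implication $(1)\Rightarrow(2)$ is exactly the existence half of the classical Liv\v{s}ic theorem, Theorem~\ref{livsic for Anosov}(1), which holds for every $\mathcal{C}^1$ transitive Anosov diffeomorphism; and every volume-preserving Anosov diffeomorphism is transitive, because Poincar\'e recurrence forces $\Omega(T)=M$ and an Anosov system with $\Omega(T)=M$ is transitive by spectral decomposition. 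Hence the whole theorem reduces to showing the measurable-rigidity step $(3)\Rightarrow(1)$: a measurable coboundary must have trivial periodic data.

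The mechanism I would use for $(3)\Rightarrow(1)$ is the Central Limit Theorem. Assume $\phi=\Phi\circ T-\Phi$ holds $m$-almost everywhere for some measurable $\Phi$. The Birkhoff sums then telescope, $S_n\phi:=\sum_{k=0}^{n-1}\phi\circ T^k=\Phi\circ T^n-\Phi$, so the family $\{S_n\phi\}_{n\ge 1}$ is tight: for any $\varepsilon>0$ choose $R$ with $m(|\Phi|>R)<\varepsilon$, and since $m$ is $T$-invariant, $m(|S_n\phi|>2R)\le m(|\Phi\circ T^n|>R)+m(|\Phi|>R)=2\,m(|\Phi|>R)<2\varepsilon$ uniformly in $n$. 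Consequently $S_n\phi/n\to 0$ and $S_n\phi/\sqrt n\to 0$ in $m$-measure. Comparing $S_n\phi/n\to 0$ with Birkhoff's ergodic theorem, using ergodicity of $m$ (which I build into the residual set), yields $\int_M\phi\,dm=0$; once the mean is centred, comparing $S_n\phi/\sqrt n\to 0$ with the Central Limit Theorem $S_n\phi/\sqrt n\Rightarrow\mathcal{N}(0,\sigma^2)$ forces the limiting variance to degenerate, $\sigma^2=0$.

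It then remains to convert $\sigma^2=0$ into vanishing periodic data. For a H\"older observable of zero mean the variance $\sigma^2$ vanishes precisely when $\phi$ is cohomologous to $0$, i.e. when $\phi$ is a H\"older coboundary, and by Liv\v{s}ic this is equivalent to condition $(1)$. This is the step where the residual set $\mathcal{G}$ is decisive and where the main obstacle lies. For $\mathcal{C}^2$ volume-preserving Anosov systems the volume measure is the Gibbs measure of the H\"older potential $-\log\det(DT|E^u)$, so both the Central Limit Theorem and the characterization $\sigma^2=0\Leftrightarrow\text{coboundary}$ are supplied by the thermodynamic formalism. For merely $\mathcal{C}^1$ systems this machinery is unavailable: the potential need not be H\"older and the invariant foliations need not be absolutely continuous, so even ergodicity of $m$ is not automatic. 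My plan is therefore to define $\mathcal{G}$ as the set of $\mathcal{C}^1$ volume-preserving Anosov diffeomorphisms for which $m$ is ergodic and the Central Limit Theorem with its zero-variance characterization holds for H\"older observables, and to prove that $\mathcal{G}$ is residual.

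The hard part will be establishing that $\mathcal{G}$ is indeed residual in the $\mathcal{C}^1$ topology. The approach I would take is approximation combined with semicontinuity: the $\mathcal{C}^\infty$ (hence $\mathcal{C}^2$) volume-preserving Anosov diffeomorphisms are $\mathcal{C}^1$-dense and satisfy all the statistical conclusions above, so I would express the desired decay-of-correlations and variance properties through a countable family of conditions that are open in the $\mathcal{C}^1$ topology and dense by this approximation, and then intersect them. The delicate point throughout is controlling the relevant correlation and variance functionals across the $\mathcal{C}^1$ limit, \emph{despite} the loss of absolute continuity of the stable and unstable foliations that guaranteed these estimates at the smooth systems; this persistence on a residual set is the crux. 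Once it is secured, the four implications assembled above close the equivalence $(1)\Leftrightarrow(2)\Leftrightarrow(3)$ for every $T\in\mathcal{G}$.
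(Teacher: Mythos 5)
Your reduction to the single nontrivial implication and your core mechanism are exactly the paper's: a measurable coboundary makes the Birkhoff sums $S_n\phi=\Phi\circ T^n-\Phi$ tight (the same $2C_\varepsilon$ estimate appears in the paper), and tightness at scale $\sqrt n$ is incompatible with CLT-type spreading unless the periodic data vanish. The genuine gap is in the genericity step, which you explicitly leave as a black box (``once it is secured\dots''). You propose to take $\mathcal{G}$ to be the set of $\mathcal{C}^1$ volume-preserving Anosov systems for which $m$ is ergodic, the CLT holds for all H\"older observables, and $\sigma^2=0$ characterizes coboundaries, and to prove this residual by writing these properties as countable intersections of $\mathcal{C}^1$-open dense conditions. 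But these are asymptotic statements, and there is no apparent way to express ``the CLT holds'' or ``$\sigma^2=0\Leftrightarrow$ coboundary'' as open conditions in the $\mathcal{C}^1$ topology; density of $\mathcal{C}^2$ systems by itself buys nothing, because none of these properties is known to persist under $\mathcal{C}^1$ limits --- the thermodynamic formalism that produces them (volume as a Gibbs state of the H\"older potential $-\log\det(DT|E^u)$) is precisely what is lost in $\mathcal{C}^1$, and even ergodicity of $m$ for $\mathcal{C}^1$-generic Anosov systems cannot be obtained by such soft arguments. So your plan stalls exactly at its self-identified crux.

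The missing idea --- and the paper's actual device --- is a \emph{finitization} of the CLT. Instead of demanding the CLT at the generic system, the paper extracts from it a finite-time, quantitative consequence: $T$ is of $(C,\tilde C,\varepsilon,p)$-type if there is a \emph{common} time horizon $N$ such that every zero-mean $\alpha$-H\"older $\phi$ with $\Vert\phi\Vert_\alpha\le\tilde C$ and periodic data $\ge\varepsilon$ at $p$ satisfies $m\{x\in M:\phi_k(x)>C\}>\tfrac12-\varepsilon$ for some $1\le k\le N$. For $\mathcal{C}^2$ systems this follows from the CLT (Theorem \ref{Central Limit Theorem}) \emph{together with compactness of the family} $\mathcal{F}_T(\tilde C,\varepsilon,p)$, which is what yields the uniform $N$ (Proposition \ref{property for C^2}); and because the condition involves only finitely many iterates and finitely many relevant periodic points, it survives $\mathcal{C}^1$-small perturbation, so Avila's density theorem (Theorem \ref{Dense}) plus structural stability makes the corresponding sets $H_k$ open and dense, and $\mathcal{G}=\cap_k H_k$ residual. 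This type property alone --- with no CLT, no ergodicity, and no Gibbs structure at the $\mathcal{C}^1$ system itself --- already feeds your tightness contradiction (Proposition \ref{Type Good}). Two smaller points: you invoke ergodicity where it is not needed, since $S_n\phi/n\to0$ in measure combined with Birkhoff forces the conditional expectation of $\phi$ on the invariant $\sigma$-algebra to vanish, hence $\int\phi\,dm=0$ without ergodicity; and your appeal to ``$\sigma^2=0\Leftrightarrow$ coboundary'' at the generic system can be dispensed with entirely, since the contradiction can be run directly against the finite-time spreading estimate rather than against a degenerate limiting variance.
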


 We also get a parallel result for Anosov flows. 

 \begin{Thm}\label{Flow}There exists a residual subset $\mathcal{G}$ of   $\mathcal{C}^1$  Anosov volume-preserving flows on a compact    Riemannian manifold $M$ such that for any  flow $\{T^t\}\in \mathcal{G}$  and  any H\"{o}lder 
continuous function $\phi:M\rightarrow \mathbb{R}$,  
the following three conditions are equivalent:
\begin{enumerate}\item[$(1)$] $\int^{t_p}_0\phi(T^s(p))\,ds=0$ for every periodic point $p$ with period $t_p$, 
\item[$(2)$] $\phi=\Phi'_{\xi}$ for a H\"{o}lder continuous function $\Phi$ differentiable along the flow,
\item[$(3)$] $\phi=\Phi'_{\xi}$ almost everywhere for a measurable function $\Phi$ differentiable along the flow.
\end{enumerate}
\end{Thm}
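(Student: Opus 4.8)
The plan is to prove the cycle $(1)\Rightarrow(2)\Rightarrow(3)\Rightarrow(1)$, with the first two implications essentially classical and the last one carrying all the new content. The implication $(1)\Rightarrow(2)$ is exactly the existence part of the classical Liv\v{s}ic theorem for transitive Anosov flows, Theorem \ref{livsic for flow}(1); it requires only topological transitivity, which holds for every volume-preserving Anosov flow because the invariant measure $m$ is positive on open sets, so the nonwandering set is all of $M$, the spectral decomposition then consists of a single basic set, and $\{T^t\}$ is transitive. The implication $(2)\Rightarrow(3)$ is trivial, a H\"older function differentiable along the flow being in particular a measurable one. Thus everything reduces to showing that a \emph{measurable} solution forces the periodic data to vanish, i.e. $(3)\Rightarrow(1)$, and this is where the residual set $\mathcal{G}$ enters.

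I would take $\mathcal{G}$ to be the set of $\mathcal{C}^1$ volume-preserving Anosov flows for which the volume measure $m$ enjoys exponential decay of correlations and the Central Limit Theorem on the space of H\"older observables; the core of the construction (carried out for diffeomorphisms in Theorem \ref{Glivsic} and paralleled here) is to show that this set is residual. The obstruction is precisely that for merely $\mathcal{C}^1$ systems $m$ need not be a Gibbs measure with H\"older potential, so the usual thermodynamic formalism is unavailable and the invariant foliations may fail to be absolutely continuous. Granting these properties, the argument for $(3)\Rightarrow(1)$ runs as follows. Suppose $\phi=\Phi'_{\xi}$ holds $m$-almost everywhere with $\Phi$ measurable and differentiable along the flow. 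Integrating along orbits (justified on a full-measure, flow-saturated set by Fubini against the local product of leafwise and transverse measures) gives $\int_0^T\phi(T^s x)\,ds=\Phi(T^T x)-\Phi(x)$ for $m$-a.e. $x$; in particular $\bar\phi:=\int_M\phi\,dm=0$, and since $\Phi$ is finite a.e. the left-hand side divided by $\sqrt{T}$ tends to $0$ in measure as $T\to\infty$.

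On the other hand the Central Limit Theorem for $\{T^t\}\in\mathcal{G}$ asserts that $\tfrac{1}{\sqrt{T}}\int_0^T\phi(T^s x)\,ds$ converges in distribution to a Gaussian $\mathcal{N}(0,\sigma^2)$, the variance being given by the Green--Kubo formula whose convergence is guaranteed by the exponential decay of correlations. Convergence in measure to $0$ is incompatible with a nondegenerate Gaussian limit, so necessarily $\sigma^2=0$. It then remains to show that $\sigma^2=0$ implies the vanishing of the periodic data; this is the analogue for $m$ of the classical fact, in thermodynamic formalism, that a zero-mean H\"older observable has zero variance precisely when it is a coboundary. Using the decay estimates I would produce from $\sigma^2=0$ an honest H\"older transfer function $\Psi$ with $\phi=\Psi'_{\xi}$ (a Gordin-type telescoping argument, upgraded to H\"older regularity along the weak-stable and weak-unstable leaves by the Liv\v{s}ic contraction estimates for $\phi$), and then integrating $\phi=\Psi'_{\xi}$ around each periodic orbit of period $t_p$ gives $\int_0^{t_p}\phi(T^s p)\,ds=\Psi(T^{t_p}p)-\Psi(p)=0$, which is exactly $(1)$.

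The principal difficulty is not the logical skeleton above but the two statistical inputs that must replace the missing Gibbs structure: first, that exponential decay of correlations and the CLT for $m$ hold on a residual set of $\mathcal{C}^1$ flows, and second, the precise zero-variance characterization $\sigma^2(\phi)=0\Rightarrow\phi$ is a H\"older coboundary in this low-regularity setting. I expect the second to be the true crux, since it must reconstruct a genuinely continuous solution along the weak-stable and weak-unstable foliations \emph{without} absolute continuity of the holonomies, relying instead on the H\"older contraction of $\phi$ along leaves together with the generic decay estimates. The flow-specific bookkeeping---differentiability along $\xi$, the neutral flow direction, and the Fubini argument on flow-saturated null sets---is routine by comparison and can be handled exactly as in the diffeomorphism case of Theorem \ref{Glivsic}.
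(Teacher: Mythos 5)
Your reduction to $(3)\Rightarrow(1)$, and your treatment of $(1)\Rightarrow(2)$ via transitivity and $(2)\Rightarrow(3)$ as trivial, match the paper's skeleton. But the core of your argument has two genuine gaps, and they sit exactly at the points the paper's construction is engineered to avoid. First, you define $\mathcal{G}$ as the set of $\mathcal{C}^1$ volume-preserving Anosov flows whose volume measure satisfies exponential decay of correlations and the CLT for H\"older observables, and you assert that showing this set is residual is "the core of the construction." No mechanism for this is given, and this is not what the paper does: the CLT and correlation decay are asymptotic (infinite-time) properties of the flow itself, they are not open conditions in the $\mathcal{C}^1$ topology, and there is no known way to propagate them from the dense set of $\mathcal{C}^2$ flows (where Ratner's Theorem \ref{C2} applies, since volume is then a Gibbs state) to a full $\mathcal{C}^1$-neighborhood. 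The paper's residual set is built from a different kind of property: the $(C,\tilde{C},\varepsilon,p)$-type condition, a \emph{finite-time}, quantitative dispersion statement (within a uniform time window $N$, some Birkhoff sum/integral exceeds $C$ on a set of measure $>\frac{1}{2}-\varepsilon$, uniformly over the compact family $\mathcal{F}(\tilde{C},\varepsilon,p)$ of observables). Precisely because it involves only a compact time window and a compact family of H\"older functions, this condition is open under $\mathcal{C}^1$ perturbation of the system; it holds for $\mathcal{C}^2$ systems as a \emph{consequence} of the CLT, and $\mathcal{C}^2$ systems are $\mathcal{C}^1$-dense by Avila's regularization theorem (Theorem \ref{Dense}) together with structural stability. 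Thus the CLT is only ever invoked for nearby $\mathcal{C}^2$ systems, never for the generic $\mathcal{C}^1$ flow itself.

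Second, even granting your $\mathcal{G}$, your chain "measurable solution $\Rightarrow\sigma^2=0\Rightarrow\phi$ is a H\"older coboundary $\Rightarrow$ periodic data vanish" routes through the zero-variance characterization, which you yourself identify as the crux and do not prove. In the $\mathcal{C}^1$ setting that step is exactly where the missing Gibbs/thermodynamic structure would be required (Gordin-type martingale approximation needs transfer-operator or $L^2(m)$ spectral control that is unavailable without absolutely continuous holonomies), so it cannot be deferred as routine. The paper never needs any such statement: from the measurable solution it extracts only tightness of the ergodic integrals, $m\{x:|\phi_t(x)|\leq 2C_{\varepsilon}\}>1-\frac{\varepsilon}{2}$ for all $t$, and from the type property it gets the spreading estimate $m\{x:\phi_k(x)>C\}>\frac{1}{2}-\varepsilon$ for some $k\leq N$ whenever the integral over some periodic orbit is $\geq\varepsilon$; taking $C>2C_{\varepsilon}$ these two sets are disjoint, so their measures sum to at most $1$, which is false for small $\varepsilon$. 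This direct tightness-versus-spreading contradiction (Proposition \ref{Type Good}, transposed to flows) replaces both of the unproven statistical inputs on which your proposal rests.
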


\section*{Acknowledgements}
The author would like to thank Artur Avila and Amie Wilkinson for their useful suggestions. The author also would like to thank Jinxin Xue and Rachel Vishnepolsky for their careful reading. The author also would like to thank  China Scholarship Council for its financial support. 
\section{Preliminaries}
\subsection{Anosov diffeomorphisms}
Assume $M$ to be a compact   Riemannian manifold.  Recall that a diffeomorphism $T:M\rightarrow M$ is called  Anosov if there  is a $T$-invariant splitting $$TM=E^s\oplus E^u$$ and constants $C,\rho<1$, such that 
$$\forall v\in E^s, \Vert  DT^nv\Vert \leq C\rho^n\Vert v\Vert , $$
$$\forall v\in E^u, \Vert  DT^{-n}v\Vert \leq C\rho^n\Vert v\Vert . $$ 
Now we formulate the Central Limit Theorem for $\mathcal{C}^2$ volume-preserving  Anosov diffeomorphisms. Its proof involves the construction of  Markov partition of Anosov diffeomorphisms and the corresponding statistical property of  subshifts of finite type.  
\begin{Thm}[Central Limit Theorem]\cite{B}\label{Central Limit Theorem}Let $T$ be a  $\mathcal{C}^2$  Anosov volume-preserving diffeomorphism on 
compact    Riemannian manifold
 $M$. Let $m$ be the volume measure on $M$. Let $\phi$ be a  H\"{o}lder continuous function on $M$ with no measurable solution $\Phi$ to the equation: $$\phi(x)-\int \phi(x)\,dx=\Phi(T(x))-\Phi(x).$$
  Then $\phi$ satisfies the Central Limit Theorem with respect to $T$, i.e.
there exists a constant $\sigma>0$ such that for any $-\infty<\alpha<+\infty$,
$$\lim_{n\rightarrow+\infty}m\left\{x\in M: \frac{\sum_{i=0}^{n-1}\phi(T^i(x))-n\overline{\phi}}{\sigma \sqrt{n}}<\alpha\right\}=
\frac{1}{\sqrt{2\pi}}\int^{\alpha}_{-\infty} e^{-\frac{1}{2}u^2}\,du,$$
where $\overline{\phi}=\int_M\, \phi(x) dm.$
What's more, $\sigma^2=\lim_{n\rightarrow+\infty}\frac{\int(\phi_n)^2\,dx}{n}$,
 where $\phi_n=\sum_{i=0}^{n-1}\\(\phi(T^i(x))-\overline{\phi})$ and
$\sigma$ is called the variance with respect to $\phi.$
\end{Thm}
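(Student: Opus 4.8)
The plan is to transfer the problem to symbolic dynamics by means of a Markov partition, and then to exploit the spectral theory of the Ruelle transfer operator. Since $T$ is a $\mathcal{C}^2$ volume-preserving Anosov diffeomorphism, I would first invoke the existence of a Markov partition, yielding a finite-to-one semiconjugacy $\pi:\Sigma\to M$ from a two-sided topologically mixing subshift of finite type $(\Sigma,\sigma)$ onto $M$ with $\pi\circ\sigma=T\circ\pi$. As recorded in the paper, the volume measure $m$ is the Gibbs (equilibrium) state for the H\"older potential $\varphi=-\log\det(DT|E^u)$; hence its lift $\tilde\mu$ is the Gibbs state for $\varphi\circ\pi$ on $\Sigma$. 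The H\"older function $\phi$ lifts to a H\"older function $\psi=\phi\circ\pi$, and it suffices to establish the Central Limit Theorem for $\psi$ over $(\Sigma,\sigma,\tilde\mu)$, since $\pi_*\tilde\mu=m$ and the Birkhoff sums correspond.

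Next I would pass from the two-sided shift to the one-sided shift by the standard cohomology argument of Sinai and Bowen: every H\"older function on $\Sigma$ is cohomologous, through a H\"older transfer function, to one depending only on the nonnegative coordinates. Replacing $\psi$ by such a representative alters its Birkhoff sums only by a uniformly bounded telescoping term, which affects neither the limiting distribution nor the variance. On the one-sided shift I would then study the normalized Ruelle transfer operator $\mathcal{L}$ attached to the potential, acting on the space of H\"older functions. The central analytic input is that $\mathcal{L}$ enjoys a \emph{spectral gap}: the leading eigenvalue $1$ is simple and isolated, with the rest of the spectrum confined to a disk of strictly smaller radius. This yields exponential decay of correlations, so that
$$\sigma^2=\int(\psi-\bar\psi)^2\,d\tilde\mu+2\sum_{n\geq 1}\int(\psi-\bar\psi)\,\big((\psi-\bar\psi)\circ\sigma^n\big)\,d\tilde\mu$$
converges absolutely and equals $\lim_{n\to\infty}\frac{1}{n}\int\psi_n^2\,d\tilde\mu$, which is precisely the asserted variance formula.

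With the spectral gap in hand, the Central Limit Theorem follows by either of two classical routes: the Gordin--Liverani martingale approximation, or the Nagaev--Guivarc'h method, in which one analytically perturbs $\mathcal{L}$ to $\mathcal{L}_t(\cdot)=\mathcal{L}(e^{it\psi}\,\cdot)$ and tracks the perturbed leading eigenvalue $\lambda(t)=1-\tfrac{1}{2}\sigma^2 t^2+o(t^2)$, recovering the Gaussian characteristic function. The remaining essential point is the non-degeneracy $\sigma^2>0$, and this is exactly where the hypothesis is used: it is a standard dichotomy that $\sigma^2=0$ forces $\psi-\bar\psi$ to be an $L^2$-coboundary, which pushes down to a measurable solution $\Phi$ of $\phi(x)-\int\phi\,dx=\Phi(T(x))-\Phi(x)$ (indeed one may upgrade it via the measurable rigidity of Theorem \ref{livsic for Anosov}), contradicting the assumption that no such solution exists.

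I expect the main obstacle to be the careful verification of the spectral gap for $\mathcal{L}$, together with the bounded-distortion and Lasota--Yorke type estimates underpinning it, since everything downstream — the exponential decay of correlations, the convergence of the variance series, and the coboundary dichotomy giving $\sigma^2>0$ — rests on that spectral picture. A secondary technical care is needed at the semiconjugacy stage, where $\pi$ is only finite-to-one on a boundary set; but since that set is $m$-null and $\tilde\mu$-null, it does not disturb the distributional limit.
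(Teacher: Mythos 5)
Your proposal is correct and follows essentially the same route the paper points to: the paper does not prove this theorem but cites Bowen \cite{B}, noting explicitly that the proof goes through the Markov partition for Anosov diffeomorphisms and the statistical theory of subshifts of finite type, which is exactly your reduction to symbolic dynamics, transfer-operator spectral gap, and coboundary dichotomy for $\sigma^2>0$.
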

\subsection{Anosov flows}
Let us recall the definition of Anosov flow first. Let $M$ be a compact   Riemannian manifold. Let $T:\mathbb{R}\times M\rightarrow M$ be a $\mathcal{C}^1$ flow on $M$ generated by the vector field $\xi=\frac{d}{dt}(T^t)|_{t=0}$ where $T^t(\cdot)$ denotes $T(t,\cdot)$. Flow $T:\mathbb{R}\times M\rightarrow M$ is called Anosov if there is a continuous splitting $TM=E^{+}\oplus E^0\oplus E^{-}$ with $E^0$ spanned by $\xi$ and there are positive constants $c_1,c_2$ and $\gamma$ such that
$$\Vert  D(T^t)(\eta)\Vert \geq c_1\cdot e^{t\gamma}\cdot \Vert \eta\Vert , \,\forall\,  \eta\in E^{+} \text{ and } t\geq 0,$$
$$\Vert  D(T^t)(\eta)\Vert \leq c_2\cdot e^{-t\gamma}\cdot \Vert \eta\Vert ,\, \forall\,  \eta\in E^{-} \text{ and } t\geq 0.$$
We use the notation $\{T^t\}$ as the flow $T(t,\cdot)$ in this paper. We give the $\mathcal{C}^r$ topology of flows in the following definition.  
\begin{Def}[ $\mathcal{C}^r$ Topology for Flows] Let $\mathcal{F}^r(M)$ be the space of $\mathcal{C}^r$-flows on $M$. Every flow $\{T^t\}\in \mathcal{F}^r(M)$ restricts to a $\mathcal{C}^r$ map $T^{[t_0]}:[0,t_0]\times M\rightarrow M.$ Since $[0,t_0]\times M$ is compact, we may take the usual $\mathcal{C}^r$ topology on $\mathcal{C}^r$ maps $[0,t_0]\times M\mapsto M$, and thereby define a $\mathcal{C}^r$ topology on $\mathcal{F}^r(M)$. Using the one parameter group property of flows, it is easy to see that the $\mathcal{C}^r$ topology we have defined on $\mathcal{F}^r(M)$ is independent of $t_0>0$. 
\end{Def}
In the interest of proving Theorem \ref{Flow}, we will also use the Central Limit Theorem for $\mathcal{C}^2$ volume-preserving Ansov flows, which is proved by taking advantage of the Markov partition for Anosov flows.  
\begin{Thm}[Central Limit Theorem for Anosov flows]\cite{R}  \label{C2}Let $\{T^t\}$ be a   $\mathcal{C}^2$ Anosov volume-preserving flow on a compact    Riemannian manifold $M$ generated by vector field $\xi$ and let $\phi:M\rightarrow \mathbb{R}$ be a H\"{o}lder continuous function on $M$. Let $m$ be the volume measure on $M$. If there is no measurable function $\Phi:M\rightarrow \mathbb{R}$ differentiable along the flow $\{T^t\}$ such that 
$$\phi=\Phi'_{\xi},\,\,\, a. e. $$
then $\phi$ satisfies the Central Limit Theorem relative to $\{T^t\}$, i.e. 
  there exists a constant $\sigma>0$ such that for any $-\infty<\alpha<+\infty$ 
$$\lim_{t\rightarrow +\infty}m\left\{x\in M: \frac{\int^t_0\,(\phi(T^s(x))-\overline{\phi})\,ds}{\sigma \sqrt{t}}<\alpha\right\}=\frac{1}{\sqrt{2\pi}}\int^{\alpha}_{-\infty} e^{-\frac{1}{2}u^2}du$$
where $\overline{\phi}=\int_{M}\phi(x)\,dm.$ Moreover,
$$\sigma^2=\lim_{t\rightarrow+\infty}\frac{\int_{M}(\phi_t(x))^2 \,dm}{t}$$
where $\phi_t$ denotes $\int_{0}^{t}(\phi(T^s(x)-\overline{\phi})\,ds$ and $\sigma$ is called the variance to function $\phi$.  
\end{Thm}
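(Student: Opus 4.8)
The plan is to pass from the continuous-time flow to a discrete-time symbolic model and then invoke the already-established central limit theorem for subshifts of finite type, i.e.\ the symbolic version of Theorem \ref{Central Limit Theorem}. Since $\{T^t\}$ is a $\mathcal{C}^2$ Anosov flow, the Bowen--Ratner construction of Markov partitions for Anosov flows \cite{B} furnishes a subshift of finite type $(\Sigma,\sigma)$, a strictly positive Hölder roof function $r:\Sigma\to\mathbb{R}^+$, and a measurable conjugacy between $\{T^t\}$ on $M$ and the suspension flow $\{S^t\}$ on $\Sigma^r=\{(x,u):0\le u\le r(x)\}/{\sim}$, where $S^t$ translates the $u$-coordinate subject to the identification $(x,r(x))\sim(\sigma x,0)$. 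Under this conjugacy the volume measure $m$, being the SRB measure of the flow, pulls back to $\hat\mu=(\mu\times\mathrm{Leb})/\int r\,d\mu$ for a Gibbs state $\mu$ on $(\Sigma,\sigma)$.

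First I would replace the continuous integral by a Birkhoff sum over the base. Introduce the induced observable $\tilde\phi(x)=\int_0^{r(x)}\phi(x,u)\,du$, which is Hölder on $\Sigma$ since $\phi$ and $r$ are, and for a point $(x,0)$ let $N=N(x,t)$ be the number of complete returns before time $t$, i.e.\ the largest $N$ with $\sum_{k=0}^{N-1}r(\sigma^k x)\le t$. Then
\[
\int_0^t\phi(S^s(x,0))\,ds=\sum_{k=0}^{N-1}\tilde\phi(\sigma^k x)+R(x,t),
\]
where the boundary term $R(x,t)$ from the final incomplete fibre satisfies $|R(x,t)|\le\|\phi\|_\infty\|r\|_\infty$. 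The symbolic central limit theorem \cite{B,PP} applied to the Hölder function $\tilde\phi-\bar\phi\,r$ with respect to $\mu$ then yields a Gaussian limit for $\frac1{\sqrt N}\sum_{k=0}^{N-1}(\tilde\phi-\bar\phi\,r)(\sigma^k x)$, provided this function is not a coboundary.

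The non-degeneracy hypothesis must be transferred through the conjugacy. After centering, the relevant base observable is $\tilde\phi-\bar\phi\,r$, which has $\mu$-mean zero because $\bar\phi=\int_M\phi\,dm=\big(\int\tilde\phi\,d\mu\big)/\int r\,d\mu$. Writing $\Phi'_\xi=\partial_u\Phi$ in suspension coordinates and integrating over a fibre shows that $\phi-\bar\phi=\Phi'_\xi$ admits a solution differentiable along the flow if and only if $\tilde\phi-\bar\phi\,r=g\circ\sigma-g$ for $g(x)=\Phi(x,0)$, i.e.\ if and only if $\tilde\phi-\bar\phi\,r$ is a coboundary for $\sigma$. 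Hence the assumed absence of a measurable flow solution forces $\tilde\phi-\bar\phi\,r$ not to be a coboundary, the discrete variance is strictly positive, and the Green--Kubo series $\sum_{n\in\mathbb{Z}}\int(\tilde\phi-\bar\phi r)\big[(\tilde\phi-\bar\phi r)\circ\sigma^n\big]\,d\mu$ converges by exponential decay of correlations for the Gibbs state $\mu$. Dividing by $\int r\,d\mu$ produces the stated flow variance $\sigma^2=\lim_{t\to\infty}\frac1t\int_M\phi_t^2\,dm>0$.

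The main obstacle is that the number of returns $N(x,t)$ is itself random, so a central limit theorem for a \emph{fixed} number of summands does not immediately give the \emph{fixed-time} statement normalized by $\sigma\sqrt t$. By the Birkhoff ergodic theorem $N(x,t)/t\to 1/\int r\,d\mu$ for $\hat\mu$-almost every point, while the fluctuations of $N$ about $t/\int r\,d\mu$ are of order $\sqrt t$ and are themselves controlled by a central limit theorem for the roof function $r$. The standard device is to show that, once $\phi$ has been centered, replacing the random $N$ by its deterministic value $t/\int r\,d\mu$ and discarding the bounded boundary term $R$ alter the normalized integral only by $o(\sqrt t)$ in probability; a Slutsky-type argument then converts the random-index Gaussian limit for $\sum\tilde\phi\circ\sigma^k$ into the fixed-time Gaussian limit for $\frac1{\sigma\sqrt t}\int_0^t(\phi-\bar\phi)\circ T^s\,ds$, which is the assertion of the theorem.
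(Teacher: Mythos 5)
The paper does not actually prove this theorem: it is imported from Ratner \cite{R}, with only the remark that the proof ``is proved by taking advantage of the Markov partition for Anosov flows,'' and your sketch (symbolic suspension model, induced observable $\tilde\phi-\overline{\phi}\,r$, coboundary transfer, Anscombe/Slutsky treatment of the random return index) is exactly that standard argument, so it is consistent with the paper's cited approach and sound as a sketch. One caveat worth recording: your coboundary-transfer step in fact proves the correctly \emph{centered} statement (no measurable solution of $\phi-\overline{\phi}=\Phi'_{\xi}$ implies the CLT with $\sigma>0$), which is what the hypothesis should say, matching the discrete Theorem \ref{Central Limit Theorem}; the hypothesis as literally written in Theorem \ref{C2} (no solution of $\phi=\Phi'_{\xi}$) is not sufficient, as a nonzero constant $\phi$ shows.
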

We formulate the definition of cocycles for Anosov flows, introducing some useful terminology along the way.   In particular, let $T:\mathbb{R}\times M\rightarrow M$ be a $C^1$ Anosov flow,  a continuous map $\mathcal{A}:{\mathbb{R}}\times M \rightarrow \mathbb{R}$ is {\bf a  cocycle} over $T$ if 
$$\mathcal{A}(t_1+t_2,x)=\mathcal{A}(t_1, T^{t_2}(x))+\mathcal{A}(t_2,x)$$
for every $t_1,t_2\in \mathbb{R}$ and every $x\in M$.  Cocycle  $\mathcal{A}:\mathbb{R}\times M\rightarrow \mathbb{R}$ is  {\bf a coboundary} if there exists a map $\Phi:M\rightarrow \mathbb{R}$ such that 
\begin{eqnarray}\label{coc}\mathcal{A}(t,x)=\Phi(T^t(x))-\Phi(x). 
\end{eqnarray}

From now on, we only consider cocycles which are  differentiable along the flow. Namely,  $\mathcal{A}(t,p)$ is a $\mathcal{C}^1$ function of $t$ for all $p\in M$. A cocycle $\mathcal{A}:\mathbb{R}\times M\rightarrow \mathbb{R}$ is called  H\"{o}lder continuous of exponent $\alpha\in(0,1)$ if the map 
$$x\mapsto \lim_{t\rightarrow 0}\frac{1}{t}\mathcal{A}(t,x)$$
is {\bf H\"{o}lder continuous} of exponent $\alpha$. Thus  not only the cocycle  $\mathcal{A}$ is differentiable along the flow, but also the derivative of $\mathcal{A}$ along the flow are   H\"{o}lder continuous functions on the manifold $M$. 

There is a natural bijection between cocycles and functions on $M$.    A cocycle $\mathcal{A}(t,x)$ is said to be based on a function $\phi:M\rightarrow \mathbb{R}$ if 
$$\mathcal{A}(t,x)=\int_{0}^t\phi(T^s(x))\,ds. $$
The function $\phi:M\rightarrow \mathbb{R}$ is {\bf the infinitesimal generator} $\xi(\mathcal{A})$ of cocycle $\mathcal{A}$.  The existence of this generator is due to the differentiability of the cocycle $\mathcal{A}$ along the flow.  The cocycle $\mathcal{A}$ is   H\"{o}lder continuous with exponent $\alpha$ if and only if the function $\phi:M\rightarrow \mathbb{R}$ is a H\"{o}lder continuous function with exponent $\alpha.$
Moreover,   if the equation 
\begin{eqnarray*}\mathcal{A}(t,x)=\Phi(T^t(x))-\Phi(x). 
\end{eqnarray*}
holds,  then $\phi=\Phi'_{\xi}:= d\Phi(\xi).$

\section{Proof of Theorem \ref{Glivsic}.}

We now begin the proof of Theorem \ref{Glivsic}. First we state an essential definition. 
\begin{Def} Let $T$ be a  $\mathcal{C}^1$  Anosov volume-preserving diffeomorphism on 
a compact   Riemannian manifold
 $M$. For any given constants $C>0,\tilde{C}>0,\varepsilon>0$ and any given periodic point $p\in M$ with period $P(p)$, set 
 \begin{eqnarray*}
 \mathcal{F}_{T}(\tilde{C},\varepsilon,p)&=& \left\{ \phi\, |\,\phi \text{ is an $\alpha$-H\"{o}lder continuous function on } M, \int_M\phi\, dx=0,\right.\\
 &&\left.\,\,\,\, \,\,\,\,\,\,\Vert \phi\Vert _{\alpha}\leq \tilde{C}, \sum_{i=0}^{P(p)-1}\phi(T^i(p))\geq \varepsilon \right\}. 
 \end{eqnarray*}
 We say $T$  is of {\bf $(C,\tilde{C},\varepsilon,p)$-type}, if there exists a common time $N$, such that  for any $\phi\in \mathcal{F}_{T}(\tilde{C},\varepsilon,p)$, there exists at least one moment $1\leq k\leq N $ such that,  
 \begin{equation}\label{time}
\mu\{x\in M: \phi_k(x)> C\}>\frac{1}{2}-\varepsilon,
\end{equation}
where $\phi_k(x)=\sum_{i=0}^{k-1}\phi(T^i(x)).$
\end{Def}
In the following proposition, we use the Central Limit Theorem \ref{Central Limit Theorem} to prove that for any $(C,\tilde{C},\varepsilon,p)$,  $\mathcal{C}^2$ Anosov volume-preserving diffeomorphisms are $(C,\tilde{C},\varepsilon,p)$-type.
\begin{Prop}\label{property for C^2}Let $T$ be a  $\mathcal{C}^2$  Anosov volume-preserving diffeomorphism on a
compact    manifold
 $M$. For any  $(C,\tilde{C},\varepsilon,p)$, $T$ is  $(C,\tilde{C},\varepsilon,p)$-type.
\end{Prop}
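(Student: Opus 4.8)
The plan is to argue by contradiction, combining a compactness argument on the family $\mathcal{F}_{T}(\tilde{C},\varepsilon,p)$ with the Central Limit Theorem (Theorem \ref{Central Limit Theorem}). The heuristic that drives the proof is this: if $\phi\in\mathcal{F}_{T}(\tilde{C},\varepsilon,p)$ then $\sum_{i=0}^{P(p)-1}\phi(T^i(p))\geq\varepsilon>0$, so $\phi$ has nontrivial periodic data and cannot be a coboundary; since for $\mathcal{C}^2$ volume-preserving Anosov diffeomorphisms the volume measure $m$ is Gibbs, Theorem \ref{livsic for Anosov} then rules out even a measurable solution to $\phi=\Phi(T)-\Phi$ (here $\overline{\phi}=0$). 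Hence Theorem \ref{Central Limit Theorem} applies and $\phi$ obeys the CLT with some variance $\sigma=\sigma(\phi)>0$. Writing $\alpha_k=C/(\sigma\sqrt{k})\to 0$, the CLT gives $m\{x:\phi_k(x)>C\}=m\{x:\phi_k(x)/(\sigma\sqrt{k})>\alpha_k\}\to 1/2$ as $k\to\infty$ (a convergence-in-distribution statement with a vanishing threshold against a continuous Gaussian limit), so for that fixed $\phi$ there is a time $k$ with $m\{x:\phi_k(x)>C\}>1/2-\varepsilon$. The entire difficulty is to make the threshold time \emph{uniform} over the whole family, since both $\sigma(\phi)$ and the CLT rate depend on $\phi$, and a degenerating variance would force the time to escape to infinity.

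To obtain the uniform $N$, I would suppose no such $N$ exists. Then for each $N\in\mathbb{N}$ one can choose $\phi_N\in\mathcal{F}_{T}(\tilde{C},\varepsilon,p)$ with $m\{x:(\phi_N)_k(x)>C\}\leq 1/2-\varepsilon$ for every $1\leq k\leq N$. The bound $\Vert\phi_N\Vert_{\alpha}\leq\tilde{C}$ makes the family equicontinuous and uniformly bounded, so by Arzel\`a--Ascoli a subsequence $\phi_{N_j}$ converges uniformly to some $\phi_\infty$. Uniform convergence preserves each defining condition: $\int_M\phi_\infty\,dx=0$, $\Vert\phi_\infty\Vert_{\alpha}\leq\tilde{C}$ (lower semicontinuity of the H\"older seminorm), and $\sum_{i=0}^{P(p)-1}\phi_\infty(T^i(p))\geq\varepsilon$. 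Hence $\phi_\infty\in\mathcal{F}_{T}(\tilde{C},\varepsilon,p)$, and in particular it has nontrivial periodic data, so the CLT applies to $\phi_\infty$ and yields a \emph{fixed} time $k_0$ with $m\{x:(\phi_\infty)_{k_0}(x)>C\}>1/2-\varepsilon/2$.

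The final step transfers this estimate back to the $\phi_{N_j}$. Since $\phi_{N_j}\to\phi_\infty$ uniformly, the finite Birkhoff sums $(\phi_{N_j})_{k_0}$ converge uniformly to $(\phi_\infty)_{k_0}$; to handle the discontinuity of the indicator of $\{\,\cdot>C\}$ I would introduce a small slack $\delta>0$ and use the inclusion $\{x:(\phi_\infty)_{k_0}(x)>C+\delta\}\subseteq\{x:(\phi_{N_j})_{k_0}(x)>C\}$, valid once $\Vert(\phi_{N_j})_{k_0}-(\phi_\infty)_{k_0}\Vert_\infty<\delta$. Continuity of the measure from below lets me choose $\delta$ with $m\{x:(\phi_\infty)_{k_0}(x)>C+\delta\}>1/2-\varepsilon$, whence $m\{x:(\phi_{N_j})_{k_0}(x)>C\}>1/2-\varepsilon$ for all large $j$. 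But $k_0$ is fixed, so as soon as $N_j\geq k_0$ this contradicts the choice of $\phi_{N_j}$, completing the argument. The main obstacle is precisely this uniformity, and it is resolved by the compactness and closedness of $\mathcal{F}_{T}(\tilde{C},\varepsilon,p)$ together with the fact that the hypothesis feeding the CLT, namely nontrivial periodic data and hence the absence of a measurable coboundary, survives passage to the uniform limit.
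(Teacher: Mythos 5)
Your proposal is correct and is essentially the paper's own argument: the paper likewise combines the Central Limit Theorem (applied to each member of $\mathcal{F}_{T}(\tilde{C},\varepsilon,p)$, which has nontrivial periodic data and hence no measurable coboundary solution) with compactness of $\mathcal{F}_{T}(\tilde{C},\varepsilon,p)$ in the uniform topology, only phrased directly --- a finite cover by neighborhoods $\mathcal{U}(\phi_i)$ on each of which the fixed time $N(\phi_i)$ works, then $N=\max_i N(\phi_i)$ --- rather than by contradiction via Arzel\`a--Ascoli. Your $\delta$-slack transfer step is exactly the content of the paper's (unelaborated) claim that the fixed-time estimate persists on a small neighborhood of $\phi$.
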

\begin{proof}Fix constants $(C,\tilde{C},\varepsilon)$ and a periodic point $p$ arbitrarily.  
According to  Theorem \ref{Central Limit Theorem}, for any  $\phi\in\mathcal{F}_{T}(\tilde{C},\varepsilon,p)$, there exists $\sigma>0$, such that 
for any  $\alpha_0>0$, there exists $N_0\in
\mathbb{N}$ satisfying  for any $n\geq N_0$, 
\begin{eqnarray*}
m\left\{x\in M: \frac{\sum_{i=0}^{n-1}\phi(T^i(x))}{\sigma \sqrt{n}}>\alpha_0\right\}&\geq&
\frac{1}{\sqrt{2\pi}}\int_{\alpha_0}^{+\infty}e^{-\frac{1}{2}u^2}du-\frac{\varepsilon}{2} \\
&\geq& \frac{1}{2}e^{-\frac{1}{\sqrt{2}}\alpha_0}-\frac{\varepsilon}{2}
\end{eqnarray*}
Choose $\alpha_0$  small enough such that  $ \frac{1}{2}e^{-\frac{1}{\sqrt{2}}\alpha_0}\geq \frac{1}{2}-\frac{\varepsilon}{2}$. Assume
$N_1$ to be an integer satisfying $\frac{C}{\sigma \sqrt{N_1}}\leq\alpha_0$.  
Let $N(\phi):=\max\{N_0,N_1\}$. Then, for any $n\geq N(\phi)$, 
\begin{eqnarray*}m\left\{x\in M:\sum_{i=0}^{n-1}\phi(T^i(x))>C\right\}&\geq&m\left\{x\in M: \frac{\sum_{i=0}^{n-1}\phi(T^i(x))}{\sigma \sqrt{n}}>\alpha_0\right\}\\
&>& \frac{1}{2}e^{-\frac{1}{\sqrt{2}}\alpha_0}-\frac{\varepsilon}{2}\\
&>& \frac{1}{2}-\varepsilon .
\end{eqnarray*}

For this fixed time $N(\phi)$,  there exists a small neighborhood $\mathcal{U}(\phi)$ of $\phi$ such that   for any function $\tilde{\phi}\in \mathcal{U}(\phi)$,  we have   
  \begin{equation}\label{time}
m\left\{x\in M: \tilde{\phi}_{N(\phi)}(x)> C\right\}>\frac{1}{2}-\varepsilon,
\end{equation}
where $\tilde{\phi}_{N(\phi)}(x)=\sum_{i=0}^{N(\phi)-1}\tilde{\phi}(T^i(x)).$

  Due to the compactness of the set $\mathcal{F}(\tilde{C},\varepsilon,p)$,  there exists a finite
cover $\mathcal{P}=\{\mathcal{U}(\phi_i)\}_{i=0}^K$ of  $\mathcal{F}(\tilde{C},\varepsilon,p)$ and thus a common time $$N=\max_{1\leq i\leq K} N(\phi_i).$$
    This common time $N$ satisfies the condition we want. 
\end{proof}
Now we prove that $(C,\tilde{C},\varepsilon,p)$-type implies Liv\v{s}ic measurable rigidity. 
\begin{Prop}\label{Type Good} Let $T$ be a  $\mathcal{C}^1$  Anosov volume-preserving diffeomorphism on 
a compact   Riemannian manifold
 $M$. Assume that for any $C>0$, $\tilde{C}>0$,  $\varepsilon>0$  and any periodic point $p$, $T$ is $(C,\tilde{C},\varepsilon,p)$-type.  Then  for any $\alpha$-H\"{o}lder continuous function $\phi:M\rightarrow \mathbb{R}$,  we have three equivalent  properties as follows:
 \begin{enumerate}
 \item[$(1)$] $\phi(x)=\Phi(T(x))-\Phi(x)$ has a continuous solution;
\item[$(2)$] $\sum_{x\in \mathcal{O}}\phi(x)=0$, for every $T$-periodic orbit $\mathcal{O}$;
\item[$(3)$] $\phi(x)=\Phi(T(x))-\Phi(x),a.e.$ for some measurable function $\Phi$.
\end{enumerate}

\end{Prop}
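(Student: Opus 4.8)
The plan is to prove the cycle of implications $(1)\Rightarrow(3)\Rightarrow(2)\Rightarrow(1)$. The implication $(1)\Rightarrow(3)$ is immediate, since a continuous solution is in particular measurable. For $(2)\Rightarrow(1)$ I would invoke the classical Liv\v{s}ic existence theorem, Theorem \ref{livsic for Anosov}$(1)$: a volume-preserving Anosov diffeomorphism has full nonwandering set and is therefore topologically transitive, so trivial periodic data forces a H\"older (in particular continuous) solution. Thus all the new content of the proposition lies in the implication $(3)\Rightarrow(2)$, which is precisely where the $(C,\tilde{C},\varepsilon,p)$-type hypothesis must be used.

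For $(3)\Rightarrow(2)$, suppose $\phi=\Phi\circ T-\Phi$ holds $m$-a.e.\ for a measurable $\Phi$, where $m$ denotes the (normalized) volume measure. The key first step is the telescoping identity $\phi_n(x)=\sum_{i=0}^{n-1}\phi(T^i x)=\Phi(T^n x)-\Phi(x)$, valid $m$-a.e. Since $\Phi$ is finite a.e., for every $\eta>0$ there is a threshold $M_0$ with $m\{|\Phi|>M_0\}<\eta$; as $m$ is $T$-invariant the same bound holds for $\Phi\circ T^n$, whence $m\{|\phi_n|>2M_0\}<2\eta$ for \emph{every} $n$. So the Birkhoff sums $\phi_n$ are bounded, uniformly in $n$, off a set of arbitrarily small measure. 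In particular $\phi_n/n\to 0$ in measure, and comparing with the Birkhoff ergodic theorem yields $\int_M\phi\,dm=0$. After replacing $\phi$ by $-\phi$ if necessary (which preserves $(3)$ and mean zero and leaves $\|\phi\|_\alpha$ unchanged), I argue by contradiction: if $(2)$ fails there is a periodic point $p$ with $s:=\sum_{i=0}^{P(p)-1}\phi(T^i p)>0$.

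The contradiction is obtained by colliding this uniform bound with the type property, and the delicate point is the bookkeeping of constants and quantifiers. Choose $\varepsilon>0$ with $\varepsilon\le s$ and $\varepsilon<\tfrac12$, and set $\tilde{C}:=\|\phi\|_{\alpha}$, so that $\phi\in\mathcal{F}_{T}(\tilde{C},\varepsilon,p)$. Next pick $\eta>0$ with $2\eta<\tfrac12-\varepsilon$, let $M_0$ be the corresponding threshold above, and set $C:=2M_0+1$. Applying the $(C,\tilde{C},\varepsilon,p)$-type hypothesis to this $\phi$ produces a time $k\le N$ with $m\{x:\phi_k(x)>C\}>\tfrac12-\varepsilon$; but the telescoping bound gives $m\{x:\phi_k(x)>C\}\le m\{|\phi_k|>2M_0\}<2\eta<\tfrac12-\varepsilon$, a contradiction. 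Hence every periodic sum vanishes, which is exactly $(2)$. I expect the main obstacle to be precisely this step: one must verify that the uniform-in-$n$ control of $\phi_n$ coming from a single measurable $\Phi$ genuinely clashes with the type property, whose force comes from its uniformity over the entire compact family $\mathcal{F}_{T}(\tilde{C},\varepsilon,p)$ and from the freedom to take $C$ as large as the threshold $M_0$ demands.
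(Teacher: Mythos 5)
Your proposal is correct and follows essentially the same route as the paper's proof: both extract a uniform-in-$n$ tightness bound on the Birkhoff sums $\phi_n=\Phi\circ T^n-\Phi$ from the measurable solution, and collide it with the $(C,\tilde{C},\varepsilon,p)$-type property at a periodic point with positive sum (taking $C$ larger than twice the tightness threshold), with the classical Theorem \ref{livsic for Anosov} supplying the existence direction via transitivity. You are in fact somewhat more careful than the paper, which tacitly assumes the sign normalization and omits the verification that $\int_M\phi\,dm=0$ (needed for membership in $\mathcal{F}_{T}(\tilde{C},\varepsilon,p)$), a gap your in-measure/Birkhoff comparison fills correctly without requiring ergodicity.
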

\begin{proof}By Theorem \ref{livsic for Anosov} and  the fact that $\mathcal{C}^1$ volume-preserving Anosov diffeomorphisms are transitive, we only need to check measurable rigidity, i.e. proving (1) from (3). Assume  $\Phi$ is a measurable solution to $\phi(x)=\Phi(T(x))-\Phi(x),a.e.$,  then  $\Phi$ is finite almost everywhere. For any small number $\varepsilon>0$, there exists 
a constant $C_{\varepsilon}>0$ such that
$$m\{x\in M:\Phi(x)\leq C_{\varepsilon}\}> 1-\frac{\varepsilon}{2}. $$ Thus, by the identity $\phi_n(x)=\Phi(T^n(x))-\Phi(x)$, it follows that
$$m\{x\in M: \phi_n(x)\leq 2C_{\varepsilon}\}>1-\frac{\varepsilon}{2}, \,\,\forall\,\, n\geq1.$$
If there is no continuous solution for $\phi(x)=\Phi(T(x))-\Phi(x)$, then there must exist a periodic point $p$ and $\varepsilon>0$ such that 
$\sum_{i=0}^{P(p)-1}\phi(T^i(p))\geq \varepsilon$.
However, as $T$ is $(C,\tilde{C},\varepsilon,p)$-type, for $C>2C_{\varepsilon}$ and  function $\phi$, there exists a time $1\leq k\leq N,$ such that
\begin{eqnarray*}(1-\frac{\varepsilon}{2})+(\frac{1}{2}-\varepsilon)&\leq& m\{x\in M: \phi_k(x)\leq 2C_{\varepsilon}\}+m\{x\in M: \phi_k(x)\geq C\}\\
&\leq& 1,
\end{eqnarray*}
which is a contradiction. So there exists continuous solution $\tilde{\Phi}:M\rightarrow \mathbb{R}$ such that  $\phi_n(x)=\tilde{\Phi}(T^n(x))-\tilde{\Phi}(x)$ and moreover 
$\Phi=\tilde{\Phi},a.e.$.
\end{proof}
From Proposition \ref{property for C^2}, we have $\mathcal{C}^2$  Anosov volume-preserving diffeomorphisms are good type. According to A. Avila's result \cite{AA} about the regularization of volume-preserving maps:
\begin{Thm}\label{Dense}\cite{AA} Smooth maps are  $\mathcal{C}^1$ dense in $\mathcal{C}^1$ volume-preserving maps.
\end{Thm}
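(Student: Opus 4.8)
The plan is to prove density by a \emph{mollify-and-correct} scheme: approximate a given $\mathcal{C}^1$ volume-preserving diffeomorphism $f$ of $M$ by a smooth diffeomorphism that preserves the fixed smooth volume $m$ only approximately, and then repair the volume-preservation by an exact correction that is $\mathcal{C}^1$-small. For the first step I would mollify $f$ in charts against a partition of unity, at scale $\delta$, to obtain a $\mathcal{C}^\infty$ diffeomorphism $g_0$ with $\|g_0 - f\|_{\mathcal{C}^1}\to 0$ as $\delta\to 0$ (it is a diffeomorphism once $\delta$ is small, being $\mathcal{C}^1$-close to the diffeomorphism $f$). Writing $J := \det(Dg_0)$ for the Jacobian relative to $m$, the fact that $Dg_0\to Df$ in $\mathcal{C}^0$ and $\det(Df)\equiv 1$ gives $\|J-1\|_{\mathcal{C}^0}$ small; moreover $\int_M(J-1)\,dm = 0$ automatically, since $\int_M J\,dm = \int_M dm = m(M)$ by the change-of-variables formula for the diffeomorphism $g_0$.

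For the correction I would apply Moser's method. I seek a $\mathcal{C}^\infty$ diffeomorphism $h$, $\mathcal{C}^1$-close to the identity, with $(J\circ h)\det(Dh)\equiv 1$, so that $g := g_0\circ h$ is smooth and satisfies $g^*m = h^*(J\,m) = m$, i.e. $g$ is volume-preserving. Equivalently, interpolating $\omega_t = ((1-t)+tJ)\,m$ between $m$ and $J\,m$, I would find a time-dependent vector field $X_t$ with $\operatorname{div}(\rho_t X_t) = 1-J$, $\rho_t = (1-t)+tJ$, and take $h$ to be the time-one flow; the mean-zero compatibility $\int_M(1-J)\,dm = 0$ established above is exactly what makes this divergence equation solvable. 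Solving $\Delta u_t = 1-J$ and setting $X_t = \rho_t^{-1}\nabla u_t$ produces the corrector.

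The main obstacle is a loss of derivatives in this correction step, and it is precisely what makes the statement nontrivial in the $\mathcal{C}^1$ category (as opposed to $\mathcal{C}^r$, $r\ge 2$). Elliptic regularity controls the corrector only through a Hölder norm: $\|h-\mathrm{id}\|_{\mathcal{C}^1}\lesssim \sup_t\|X_t\|_{\mathcal{C}^1}\lesssim \|J-1\|_{\mathcal{C}^{0,\alpha}}$, since $\Delta u = g$ with $g$ merely in $\mathcal{C}^0$ does \emph{not} return $u\in\mathcal{C}^2$. But mollifying a merely continuous object cannot make $\|J-1\|_{\mathcal{C}^{0,\alpha}}$ small: its Hölder seminorm behaves like $\omega(\delta)/\delta^\alpha$, where $\omega$ is the modulus of continuity of $Df$, and making $\|g_0-f\|_{\mathcal{C}^1}$ small forces $\delta$ small, which inflates this ratio. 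A single mollify-and-correct pass therefore fails whenever $Df$ has a poor modulus of continuity. I expect the resolution to be a Nash--Moser / KAM-type iteration: after localizing via a volume-preserving fragmentation into pieces $\mathrm{id}+v$ that are $\mathcal{C}^1$-close to the identity, the volume constraint $\det(I+Dv)\equiv 1$ forces $\operatorname{div} v$ — and hence the post-smoothing Jacobian defect — to be \emph{quadratically} small, of order $\|v\|_{\mathcal{C}^1}^2$. Alternating smoothings at a super-exponentially decreasing sequence of scales $\delta_n$ with successive exact corrections, and budgeting the Hölder regularity at each stage, one can arrange the quadratic gain to outrun the fixed derivative loss so that the residual defects vanish and the $\mathcal{C}^1$-corrections are summable. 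The compactness of $M$ enters throughout to supply a uniform modulus of continuity for $Df$, making every estimate in the scheme quantitative and uniform over the manifold; carrying out this iteration with smooth volume-preservation maintained exactly at every stage is the technical core.

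Finally I would recompose. Approximating each near-identity factor $g_i$ by a smooth volume-preserving $\tilde g_i$ with $\|\tilde g_i - g_i\|_{\mathcal{C}^1}$ small, the composition $\tilde g_N\circ\cdots\circ\tilde g_1$ is smooth and volume-preserving, and continuity of composition in the $\mathcal{C}^1$ topology makes it $\mathcal{C}^1$-close to $f$. Since $f$ and the target accuracy were arbitrary, smooth volume-preserving maps are $\mathcal{C}^1$-dense among $\mathcal{C}^1$ volume-preserving maps, which is the assertion.
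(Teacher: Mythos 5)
You are attempting to prove a statement that this paper does not prove at all: Theorem \ref{Dense} is imported verbatim from Avila's Acta Mathematica paper \cite{AA}, and the regularization of volume-preserving maps in the $\mathcal{C}^1$ topology was a well-known open problem (going back to Zehnder, who settled the $\mathcal{C}^r$, $r\geq 2$, and related cases) precisely because the classical scheme you describe fails. Your first two steps are the classical part and are fine: mollification gives $g_0$ with $\Vert g_0-f\Vert_{\mathcal{C}^1}$ small, the mean-zero compatibility $\int_M (1-J)\,dm=0$ holds by change of variables, and the Moser/Dacorogna--Moser correction $h$ exists with $\Vert h-\mathrm{id}\Vert_{\mathcal{C}^1}$ controlled by $\Vert J-1\Vert_{\mathcal{C}^{0,\alpha}}$. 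Your diagnosis of the obstruction is also exactly right: Schauder theory demands a H\"older norm, mollification at scale $\delta$ only gives $\Vert J-1\Vert_{\mathcal{C}^{0,\alpha}}\lesssim \omega(\delta)\delta^{-\alpha}$, and making $\Vert g_0-f\Vert_{\mathcal{C}^1}$ small forces $\delta\to 0$.

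The genuine gap is that everything after ``I expect the resolution to be a Nash--Moser / KAM-type iteration'' is a hope rather than an argument, and the bookkeeping of the scheme as stated does not close. Concretely: fragmenting $f$ into near-identity volume-preserving pieces of $\mathcal{C}^1$-size $\varepsilon$ (itself a lemma needing proof in this category) produces on the order of $N\sim 1/\varepsilon$ factors, so each must be approximated to accuracy roughly $\eta\varepsilon$; the smoothing error forces $\delta\leq\delta(\eta\varepsilon)$, where $\delta(s)$ is the inverse modulus of continuity of $Df$, while your quadratic gain bounds the correction cost by $\varepsilon^2\delta^{-\alpha}$, so you need $\varepsilon\lesssim\eta\,\delta(\eta\varepsilon)^{\alpha}$. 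For a slowly decaying modulus, say $\omega(\delta)=1/\log(1/\delta)$ so that $\delta(s)=e^{-1/s}$, this inequality fails for every small $\varepsilon$, so the fragmentation-plus-one-pass version collapses in exactly the bad regime the theorem must handle. Nor does the proposed iteration supply a contracting quantity: after one smoothing-and-correction pass the approximant is already smooth and volume-preserving, and what remains is its $\mathcal{C}^1$ distance to the fixed, merely-$\mathcal{C}^1$ target $f$, which contains an irreducible smoothing error $\omega(\delta_n)$ that further exact corrections cannot reduce --- Nash--Moser quadratic convergence operates on the residual of an equation with smoothable data, and no such equation is identified here. Closing this gap is precisely Avila's contribution, which required ideas beyond mollify-and-correct with norm budgeting; since the paper under review simply cites \cite{AA}, your proposal should be regarded as a correct identification of the difficulty together with the classical (and known-to-be-insufficient) machinery, not as a proof.
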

By Theorem \ref{Dense} and the $\mathcal{C}^1$ stability of Anosov systems, we can get $\mathcal{C}^2$  Anosov volume-preserving diffeomorphisms are dense in $\mathcal{C}^1$  Anosov volume-preserving diffeomorphisms, which is a  key point in our proof. 
\begin{Thm}There exists a residual subset $\mathcal{G}$ of $\mathcal{C}^1$  Anosov volume-preserving diffeomorphisms on a compact    Riemannian manifold $M$ such that for any $T\in\mathcal{G}$ and any $\phi:M\rightarrow \mathbb{R}$ H$\ddot{o}$lder 
continuous,
the Liv\v{s}ic theorem holds, i.e. 
the following three conditions are equivalent:
\begin{enumerate}
 \item[$(1)$] $\phi(x)=\Phi(T(x))-\Phi(x)$ has a continuous solution $\Phi$;
\item[$(2)$] $\sum_{x\in \mathcal{O}}\phi(x)=0$, for every $T$-periodic orbit $\mathcal{O}$;
\item[$(3)$] $\phi(x)=\Phi(T(x))-\Phi(x),a.e.$ for some measurable function $\Phi$.
\end{enumerate}
\end{Thm}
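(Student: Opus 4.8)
The statement is a restatement of Theorem \ref{Glivsic}, and the plan is to obtain it from Proposition \ref{Type Good}, Proposition \ref{property for C^2} and Theorem \ref{Dense} by a Baire category argument. By Proposition \ref{Type Good} it suffices to produce a residual set $\mathcal{G}$ consisting of diffeomorphisms that are $(C,\tilde C,\varepsilon,p)$-type for \emph{every} choice of constants $C,\tilde C,\varepsilon>0$ and \emph{every} periodic point $p$; on such $T$ the three conditions are automatically equivalent. The ambient space of $\mathcal{C}^1$ volume-preserving Anosov diffeomorphisms is an open subset of the (complete, hence Baire) space of $\mathcal{C}^1$ volume-preserving diffeomorphisms, so Baire's theorem is available.

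First I would cut the uncountable family of parameters down to a countable one by a monotonicity remark read straight off the definition: if $T$ is $(C,\tilde C,\varepsilon,p)$-type then it is $(C',\tilde C',\varepsilon',p)$-type whenever $C'\le C$, $\tilde C'\le\tilde C$ and $\varepsilon'\ge\varepsilon$, because shrinking $C$ only enlarges each superlevel set $\{\phi_k>C\}$, shrinking $\tilde C$ only shrinks the family $\mathcal{F}_T$, and enlarging $\varepsilon$ simultaneously shrinks $\mathcal{F}_T$ and weakens the conclusion $>\tfrac12-\varepsilon$ (the same common time $N$ works throughout). Consequently, being $(C,\tilde C,\varepsilon,p)$-type for all real parameters is equivalent to being $(n_1,n_2,1/n_3,p)$-type for all $n_1,n_2,n_3\in\mathbb{N}$. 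I would then stratify the periodic data by period and set
\[
\mathcal{H}(n_1,n_2,n_3,q)=\{\,T:\ T \text{ is }(n_1,n_2,1/n_3,p)\text{-type for every periodic } p \text{ with period}\le q\,\},
\]
so that $\mathcal{G}=\bigcap_{n_1,n_2,n_3,q}\mathcal{H}(n_1,n_2,n_3,q)$ is a countable intersection, and $T\in\mathcal{G}$ is good type for all parameters and all periodic orbits.

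The crux is that each $\mathcal{H}(n_1,n_2,n_3,q)$ is open and dense. Density is immediate: by Proposition \ref{property for C^2} every $\mathcal{C}^2$ volume-preserving Anosov diffeomorphism lies in each $\mathcal{H}$, and by Theorem \ref{Dense} together with the $\mathcal{C}^1$-openness of the Anosov condition these are $\mathcal{C}^1$-dense. For openness, fix $T_0\in\mathcal{H}(n_1,n_2,n_3,q)$; its finitely many periodic orbits of period $\le q$ are hyperbolic, so by structural stability each continues to a nearby orbit $p_T$, and for $T$ close to $T_0$ these continuations exhaust the period-$\le q$ orbits of $T$. It therefore suffices to prove the local statement: if $T_0$ is $(C,\tilde C,\varepsilon,p_0)$-type with common time $N_0$, then every $\mathcal{C}^1$-nearby $T$ is $(C,\tilde C,\varepsilon,p_T)$-type. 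I would argue exactly as in Proposition \ref{property for C^2}, but perturbing $T$ instead of $\phi$. If the claim failed there would be $T_j\to T_0$ and $\phi_j\in\mathcal{F}_{T_j}(\tilde C,\varepsilon,p_{T_j})$ with $m\{\phi_k^{T_j}>C\}\le\tfrac12-\varepsilon$ for all $k\le j$, where $\phi_k^{T}(x)=\sum_{i=0}^{k-1}\phi(T^i(x))$. The $\phi_j$ lie in the $T$-independent set $\{\|\phi\|_\alpha\le\tilde C,\ \int\phi=0\}$, which is $\mathcal{C}^0$-compact by Arzel\`a--Ascoli; passing to a subsequence $\phi_j\to\phi_\infty$ uniformly, lower semicontinuity of $\|\cdot\|_\alpha$ and the convergence $T_j^i(p_{T_j})\to T_0^i(p_0)$ give $\phi_\infty\in\mathcal{F}_{T_0}(\tilde C,\varepsilon,p_0)$. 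The hypothesis at $T_0$ yields $k_0\le N_0$ with $m\{(\phi_\infty)_{k_0}^{T_0}>C\}>\tfrac12-\varepsilon$ strictly, and joint continuity of $(T,\phi)\mapsto\phi_{k_0}^{T}$ forces $\phi_{k_0}^{T_j}\to(\phi_\infty)_{k_0}^{T_0}$ uniformly on $M$. Since the measure of a superlevel set of a continuous function is the increasing limit of the measures of slightly higher superlevel sets, one picks $\tau>0$ with $m\{(\phi_\infty)_{k_0}^{T_0}>C+\tau\}>\tfrac12-\varepsilon$ and concludes $m\{\phi_{k_0}^{T_j}>C\}>\tfrac12-\varepsilon$ for large $j$, contradicting the choice of $\phi_j$. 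Hence each $\mathcal{H}$ is open.

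Finally, Baire's theorem gives that $\mathcal{G}=\bigcap\mathcal{H}(n_1,n_2,n_3,q)$ is residual; by the monotone reduction every $T\in\mathcal{G}$ is $(C,\tilde C,\varepsilon,p)$-type for all parameters and all periodic $p$, so Proposition \ref{Type Good} delivers the equivalence of conditions $(1)$, $(2)$ and $(3)$. The main obstacle is precisely the openness lemma: one must control the conclusion uniformly over the infinite family $\mathcal{F}_T$ while $T$, the Birkhoff sums $\phi_k^{T}$, \emph{and} the defining periodic-orbit sums all vary at once. Compactness of the H\"older ball in the $\mathcal{C}^0$ topology, hyperbolic continuation of periodic orbits, and the semicontinuity of superlevel-set measures are exactly what make the limiting argument close.
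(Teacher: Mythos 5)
Your proposal is correct, but it routes the Baire argument differently from the paper, and the difference is worth recording. Both proofs rest on the same three pillars --- Proposition \ref{property for C^2} ($\mathcal{C}^2$ maps are of every type), Proposition \ref{Type Good} (type implies measurable rigidity), and Theorem \ref{Dense} (density of smooth volume-preserving maps) --- but they distribute the perturbative work in opposite ways. The paper builds openness into the definition: its sets $H_k$ require \emph{by definition} the existence of a neighborhood all of whose members are type, so openness is immediate, and all the analysis is concentrated in the density step (Lemma \ref{middle lemma}), a quantitative perturbation argument anchored at a $\mathcal{C}^2$ map $T$ with nested neighborhoods $W(T)\supset V(T)\supset U(T)$ along which the constants degrade ($\varepsilon\mapsto\varepsilon/2$ for the inclusion $\mathcal{F}_{G}(\tilde C,\varepsilon,p)\subset\mathcal{F}_{T}(\tilde C,\varepsilon/2,q)$, and $C\mapsto C/2$ when transferring Birkhoff sums from $T$ to $G$). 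You instead make density trivial (Proposition \ref{property for C^2} plus Theorem \ref{Dense}, as in the paper) and put the work into an openness lemma for the sets $\mathcal{H}(n_1,n_2,n_3,q)$, proved by a soft compactness argument (Arzel\`a--Ascoli on the H\"older ball, hyperbolic continuation of the finitely many orbits of period $\le q$, semicontinuity of superlevel-set measures) anchored at an \emph{arbitrary} type map, not necessarily a $\mathcal{C}^2$ one. Two further genuine differences. First, your monotonicity observation --- that $(C,\tilde C,\varepsilon,p)$-type is monotone in each constant, so the uncountable parameter family reduces to the countable one $(n_1,n_2,1/n_3)$ --- has no counterpart in the paper, and it is exactly what lets you treat one constant tuple at a time; the paper instead quantifies over \emph{all} constants inside each $H_k$, while the neighborhoods $W(T),V(T)$ constructed in Lemma \ref{middle lemma} do depend on $(C,\tilde C,\varepsilon)$ (through H\"older estimates of the form $\tilde C\,k\,\delta^\alpha\le\varepsilon/2$ and through the time $N$), even though the definition of $H_k$ demands a single neighborhood valid for all constants simultaneously; your discretization sidesteps this quantifier tension cleanly. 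Second, you replace the paper's indexing of periodic points by a basis element $\mathcal{V}_k$ together with the period bound $k$ by stratification by period alone, which suffices since an Anosov diffeomorphism has only finitely many periodic points of period at most $q$. In short: same skeleton and same pillars, but a different decomposition into open dense sets and a different key lemma (openness-by-compactness rather than density-by-perturbation), and the resulting argument is tighter on quantifiers than the paper's.
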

\begin{proof}We only need to prove (1) from (3) generically. Take a countable basis $\mathcal{V}=\{\mathcal{V}_1,\mathcal{V}_2,\cdots\}$ of $M$. Let $\mathcal{A}_m^1$ be the set of $\mathcal{C}^1$ Anosov volume-preserving diffeomorphisms and let $\mathcal{A}_m^2$ be the set of $\mathcal{C}^2$ Anosov volume-preserving diffeomorphisms.  Denote \begin{eqnarray*}H_k&=&\{G\in \mathcal{A}_m^1\,|\, \text{there exists a neighborhood } U(G)\subset \mathcal{A}_m^1 \text{ such that for any }\\ && G_1\in U(G), 
\text{ for any } C>0,\tilde{C}>0,\varepsilon>0 \text{ and for any periodic point of } G_1, \\&& p \in 
\mathcal{V}_k\in\mathcal{V} \text{ with period } P(p)\leq k,
G_1 \text{ is } (C,\tilde{C},\varepsilon, p)-\text{type} \}.
\end{eqnarray*} 
 It is easy to see that $H_k$ is open. Set $$\mathcal{G}:=\cap_{k\in \mathbb{N}}H_k.$$ 
Now we prove  $\mathcal{G}$ is the generic set we want.   

Let $\mathcal{A}_m^2$ be the set of $\mathcal{C}^2$ Anosov volume-preserving diffeomorphisms. 
In order to proof the density of $H_k$, we prove the following lemma first.

\begin{Lem}\label{middle lemma} The set $\mathcal{A}_m^2$ is contained in $H_k$, for all $k\geq 1.$
\end{Lem}
\begin{proof}

  Fix any $T\in\mathcal{A}_m^2$ and $\mathcal{V}_k.$ We finish our proof by choosing smaller and smaller neighborhoods of $T$.  
By Proposition \ref{property for C^2}, for any  $T\in\mathcal{A}_m^2$ and  any $(C,\tilde{C},\varepsilon,p)$, $T$ is  $(C,\tilde{C},\varepsilon,p)$-type. Thus, there exists $N$ such that :
  for any $\phi\in\mathcal{F}_{T}(\tilde{C},\varepsilon,p)$,  there exists $1\leq i\leq N$ such that, 
  \begin{equation*}
  m\{x\in M: \phi_i(x)> C\}>\frac{1}{2}-\varepsilon.
  \end{equation*}

Consider the set 
\begin{eqnarray*}
 \mathcal{F}_{G}(\tilde{C},\varepsilon,p)&=&\left\{ \phi\, |\,\phi \text{ is an $\alpha$-H\"{o}lder continuous function on } M, \int_M\phi \,dx=0,\right.\\
 &&\,\,\,\, \,\,\,\,\,\,\left.\Vert \phi\Vert _{\alpha}\leq \tilde{C}, \sum_{i=0}^{P(p)-1}\phi(G^i(p))\geq \varepsilon\right\}, 
 \end{eqnarray*}
 where $G$ is a $\mathcal{C}^1$ Anosov volume-preserving diffeomorphism  $\mathcal{C}^1$-close to $T$ and $p\in \mathcal{V}_k$ is a periodic point with period $P(p)\leq k$ for $G$.

There exists a small neighborhood $W(T)$ of  $T$ such that for any $G\in W(T)$,  $\mathcal{F}_{G}(\tilde{C},\varepsilon,p)\subset \mathcal{F}_{T}(\tilde{C},\frac{\varepsilon}{2},q)$, where $q$ is the continuation of  $p$ given by structure stability with the same period $P(p)\leq k$. Thus, there exists $N$ such that for any $\phi\in\mathcal{F}_{G}(\tilde{C},\varepsilon,p)\subset \mathcal{F}_{T}(\tilde{C},\frac{\varepsilon}{2},q),$   
we have a time $1\leq i\leq N$ such that, 
  \begin{equation*}
  m\{x\in M: \phi_{i,T}(x)> C\}>\frac{1}{2}-\frac{\varepsilon}{2},
  \end{equation*}
  where $\phi_{i,T}=\sum_{j=0}^{i-1}\phi(T^j(x)).$
 Next, there exists a smaller neighborhood $V(T)\subset W(T)$ of $T$ such that for any $G\in V(T)$, there exists $N$ such that for any $\phi\in\mathcal{F}_{G}(\tilde{C},\varepsilon,p)\subset \mathcal{F}_{T}(\tilde{C},\frac{\varepsilon}{2},q),$  we have 
there exists $1\leq i\leq N$ such that, 
  \begin{equation*}
  m\left\{x\in M: \phi_{i,G}(x)> \frac{C}{2}\right\}>\frac{1}{2}-\frac{\varepsilon}{2}.
  \end{equation*}

Taking the uniform hyperbolicity of $T$ into account, there are only finite $p\in \mathcal{V}_k$ with period $P(p)\leq k$ for every $G\in V(T).$  Thus we get another smaller neighborhood $U(T)\subset V(T),$ such that for any $G\in U(T)$, and any constants $C>0, \tilde{C}>0, \varepsilon>0$ and any periodic point $p\in \mathcal{V}_k$ with period $P(p)\leq k$,  $G$ is $(C,\tilde{C},\varepsilon,p)$-type. 

Thus, $T\in H_k$. This completes the proof of this lemma.
\end{proof} 
 
So Lemma \ref{middle lemma}  implies  that $H_k$ is $\mathcal{C}^1$ dense in $\mathcal{C}^1$ Anosov volume-preserving diffeomorphisms and then we  get that $\mathcal{G}$ is a generic set. 

It is easy to see from the definition that  for every diffeomorphism $G\in\mathcal{G}$ and  tuple $(C,\tilde{C},\varepsilon,p)$,  $G$ is $(C,\tilde{C},\varepsilon,p)$-type.  By Proposition \ref{Type Good}, we finish the proof. 

\end{proof}

\section{Proof of Theorem \ref{Flow}}
The argument for Anosov flows proceeds in an almost identical fashion as in the previous section, {\em mutatis mutandis}.   Theorem \ref{livsic for flow} and Theorem \ref{C2}, instead of Theorem \ref{livsic for Anosov} and Theorem \ref{Central Limit Theorem}, are needed in the proof of  Theorem \ref{Flow}.

\end{document}